\newcommandx{\deb}[2][1=]{\todo[linecolor=purple,backgroundcolor=purple!25,bordercolor=purple,#1]{#2}}
\newcommandx{\florian}[2][1=]{\todo[linecolor=red,backgroundcolor=red!25,bordercolor=red,#1]{#2}}
\DeclareFontFamily{U}{wncy}{}
\DeclareFontShape{U}{wncy}{m}{n}{<->wncyr10}{}
\DeclareSymbolFont{mcy}{U}{wncy}{m}{n}
\DeclareMathSymbol{\Sha}{\mathord}{mcy}{"58}
\newtheorem{theorem}{Theorem}[section]
\newtheorem*{theorem*}{Theorem}
\newtheorem{lemma}[theorem]{Lemma}
\newtheorem{proposition}[theorem]{Proposition}
\newtheorem{defi}[theorem]{Definition}
\numberwithin{equation}{section}
\theoremstyle{remark}
\newtheorem{remark}[theorem]{Remark}
\newcommand{\tr}{\operatorname{tr}}
\newcommand{\Gal}{\operatorname{Gal}}
\newcommand{\ac}{\operatorname{ac}}
\newcommand{\Aut}{\operatorname{Aut}}
\newcommand{\Qp}{\mathbb{Q}_p}
\newcommand{\Zp}{\mathbb{Z}_p}
\newcommand{\GL}{\mathrm{GL}}
\newcommand*{\EC}{\mathsf{E}}
\newcommand{\Z}{\mathbb{Z}}
\newcommand{\Q}{\mathbb{Q}}
\newcommand{\F}{\mathbb{F}}
\newcommand{\Hom}{\mathrm{Hom}}
\newcommand{\Sel}{\mathrm{Sel}}
\newcommand{\Dd}{D_{\Q(\sqrt{d})}}
\newcommand{\Selpinf}{\Sel_{p^{\infty}}(E/K_{\ac})}
 \newcommand{\widebar}[1]{\mkern 2.5mu\overline{\mkern-2.5mu#1\mkern-2.5mu}\mkern 2.5mu}
 \newcommand{\barrho}{\overline{\rho}}
\theoremstyle{theorem}
\newtheorem*{c@njecture}{\conjn@name}
\newcommand{\myl@bel}[2]{%
  \protected@write \@auxout {}{\string \newlabel {#1}{{#2}{\thepage}{#2}{#1}{}} }%
 \hypertarget{#1}{}
    } %showing an error here
\newenvironment{labelledconj}[3][]%
    {
        \def\conjn@name{#2}
        \begin{c@njecture}[{#1}]\myl@bel{#3}{#2}
    }
    {
        \end{c@njecture}
    }
\begin{document}
\title[Cotorsion of Anti-Cyclotomic Selmer groups on average]{Cotorsion of Anti-Cyclotomic Selmer groups on average}

\author[D.~Kundu]{Debanjana Kundu}
\address[Kundu]{Fields Institute \\ 
University of Toronto \\
Toronto ON, M5T 3J1, Canada}
\email{dkundu@math.toronto.edu}

\author[F.~Sprung]{Florian Sprung}
\address[Sprung]{School of Mathematical and Statistical Sciences, Arizona State University\\ Tempe, AZ 85287-1804}
\email{florian.sprung@asu.edu}

\begin{abstract}{For an elliptic curve, we study how many Selmer groups are cotorsion over the anti-cyclotomic $\Zp$-extension as one varies the prime $p$ or the quadratic imaginary field in question.}
\end{abstract}

\subjclass[2010]{11G05, 11R23 (primary); 11R45 (secondary).}
\keywords{Iwasawa theory, Selmer groups, elliptic curves, anti-cyclotomic, supersingular primes}

\maketitle

\section{Introduction}
Let $\EC$ be an elliptic curve defined over $\Q$ and let $\Q(\sqrt{d})$ be an imaginary quadratic field.
Much of the arithmetic of $\Q(\sqrt{d})$-rational points of $\EC$ is contained in the behaviour of points of $\EC$ up the anti-cyclotomic $\Zp$-tower $\Q(\sqrt{d})_{\ac}$ of $\Q(\sqrt{d})$ for a prime $p$ of good reduction.

The goal of this paper is to count the proportion of anti-cyclotomic Selmer groups whose behaviour we understand.
There are two different cases to be considered.
The first is the \emph{indefinite case}, in which the number of bad primes of $\EC$ that are inert in $\Q(\sqrt{d})$ is even.
This condition is also known as the (generalized) Heegner hypothesis, and should allow for many rational points on the elliptic curve.
Indeed, M.~Bertolini \cite{Ber95} and M.~Longo--S.~Vigni \cite{LV} show in various scenarios that the appropriate anti-cyclotomic Selmer groups have corank $1$ over the anti-cyclotomic Iwasawa algebra.

The second one is the \emph{definite case} and was studied by R.~Pollack and T.~Weston in \cite{PW11}, and in their joint work with C.~Kim \cite{KPW17}.
Here, the number of bad inert primes is odd, preventing the existence of Heegner points.
Consequently, there should be few rational points.
Their work confirms this and shows that under various hypotheses, the anti-cyclotomic Selmer group is cotorsion.

While the hypotheses employed in works concerning the first (indefinite) case are mild from a statistical point of view, the ones employed in the second (definite) case (i.e. \cite{BD05, KPW17,PW11}) cut down the proportion of provably corank $0$ Selmer groups, and we are interested in counting what this proportion is.
To this end, there are known results in some cases:
For a fixed pair ($\EC_{/\Q},p$) where $p$ is ordinary, \cite{HKR21} gave a lower bound in this definite case for the proportion of imaginary quadratic fields of the form $\Q(\sqrt{-\ell})$ with ${\ell}$ a prime for which the Selmer groups are known to be cotorsion.
Our paper generalizes \cite{HKR21} in two ways: 
\begin{enumerate}
\item We include all imaginary quadratic fields $\Q(\sqrt{d})$ in the count.
\item We remove the ordinarity hypothesis by including supersingular primes for which $a_p=0$.
\end{enumerate}
The main result of our paper measures from a statistical point of view how mild the assumptions in \cite{KPW17} and \cite{PW11} are.
A bit more precisely, it asserts that the proportion of such imaginary quadratic fields is halved (i.e. multiplied by $\frac{1}{2}$) for each prime of bad reduction that is \textit{split} that would violate the key hypothesis of \cite{KPW17} \textit{were it inert}.
Instead of overwhelming the reader with precise statements, we give a flavour via an example.
(For the technically savvy reader: they are Theorems~\ref{thm: vary K hypothesis CR} and \ref{thm: vary K hypothesis CR ordinary}, addressing the supersingular and the ordinary cases separately.)

The elliptic curve \href{https://www.lmfdb.org/EllipticCurve/Q/497/a/1}{497a1} with Weierstrass equation $y^2+xy=x^3+x^2+25x-14$ has bad reduction at $7$ and $71$.
At the prime $5$, this elliptic curve attains good supersingular reduction.
Our theorem then says that the proportion of cotorsion anti-cyclotomic Selmer groups as one varies the quadratic imaginary field is at least 
\[
\frac{1}{4}\times\frac{5\times7\times71}{6\times 8\times 72}=0.1797598\cdots.
\]
%The elliptic curve with LMFDB label $11.a1$ has bad reduction only at $11$. At the prime $19$, this elliptic curve attains good supersingular reduction. Our theorem then says that the proportion of cotorsion anti-cyclotomic Selmer groups as one varies the quadratic imaginary field is at least $$\frac{1}{4}\times\frac{19\times11}{20\times 12}=0.217708\cdots.$$ %If instead of the assumptions in \cite{PW11}, we relied on \cite{BD05}, the lower bound would be $$\frac{1}{8}\times\frac{19\times11}{20\times 12}=0.108854\cdots,$$
%i.e. our theorem shows that the work of \cite{PW11} doubled the desired proportion!
When randomly choosing imaginary quadratic fields, $p$ should split half of the time and we should land in the definite case half of the time.
This accounts for the factor of $\frac{1}{4}=\frac{1}{2}\times\frac{1}{2}$.
The other factor occurs because we count fields with discriminant coprime to $p$ and conductor of the elliptic curve.

If instead of the assumptions in \cite{PW11}, we relied on \cite{BD05}\footnote{\cite[Remark 4.2]{PW11} compares the different assumptions in more detail.}, the lower bound would be $$\frac{1}{8}\times\frac{5\times7\times71}{6\times 8\times 72}=0.0898799\cdots,$$
i.e. our theorem shows that the work of \cite{PW11} doubled the desired proportion!

We achieve the lower bound by counting the proportion of such imaginary quadratic fields that satisfy several hypotheses imposed in \cite{KPW17}, which we call \textit{choired}.
\textit{`Cho'} indicates we are in the definite case, while \textit{`ired'} indicates a ramification hypothesis.
More precisely, \textit{`Choired'} stands for: \textbf{C}onductor shouldn't satisfy  
\textbf{H}eegner hypothesis, so has an
\textbf{O}dd number of factors.
Furthermore,
\textbf{I}nert primes 
\textbf{R}amify only under 
\textbf{E}xtra 
\textbf{D}ifficulty imposed by (Kim--)Pollack--Weston.

Three steps are needed to perform the count.

First, in Lemma~\ref{residueclasses} we encode imaginary quadratic fields with the same splitting type at the bad primes of $\EC$ into something that is easier to count.
We achieve this by working with the discriminants of the fields modulo the conductor of $\EC$ (denoted by $N_{\EC}$) and modulo the prime $p$, showing that each family of such fields corresponds to a proportion of $\frac{1}{2^{r+1}}$ of the possible residue classes, where $r$ is the number of bad primes.

The second step is to estimate the proportion of imaginary quadratic fields with discriminant coprime to $pN_{\EC}$ (Proposition~\ref{lemma: count imaginary quadratic with disc coprime to pNE}).
To do this, we sum appropriate estimates due to K.~Prachar and P.~Humphries over the residue classes in question from the first step.

In the final step, we break up the choired fields into a disjoint union of families of imaginary quadratic fields with prescribed splitting type as in step $1$.
Then using the counting estimates in the previous two steps and a combinatorial count for this disjoint union, we arrive at our estimate.

\emph{Organization}:
Including this introduction, the article has four sections.
Section~\ref{section: preliminaries} is preliminary in nature.
We introduce the definition of the key objects and  also introduce the criterion of Pollack--Weston in this section.
Sections~\ref{varying over primes} and \ref{section: Varying K} are devoted to studying this criterion on average.
Section~\ref{varying over primes} is a warm-up to the main result: we show that for a fixed a pair $(\EC_{/\Q},\Q(\sqrt{d}))$, the Selmer groups are cotorsion for almost all $p$ of good ordinary reduction.
It would be interesting to prove an analogous statement for supersingular primes.
See Theorem~\ref{Th: CR-vary p} for the precise statement.
We then develop the methods to prove our main result in Section \ref{section: Varying K}, following the three steps described above.

\emph{Outlook}: If we fix the pair $\left(p,\Q(\sqrt{d})\right)$ and vary the elliptic curve (ordered by height or conductor) with good reduction at $p$, it seems to be significantly more difficult to estimate for what proportion of elliptic curves the appropriate Selmer groups are $\Lambda$-cotorsion.
We will investigate aspects of this question in future projects.

\section*{Acknowledgements}
We thank Manjul Bhargava, Allysa Lumley, V.~Kumar Murty, Artane Siad, Joseph Silverman, and Andrew Sutherland for stimulating and helpful discussions, Ming-Lun Hsieh, Chan-Ho Kim, Robert Pollack, and Tom Weston for answering our questions related to \cite{KPW17, PW11}, and Noam Elkies for answering a question related to \cite{Elk87}.
We also thank Stefano Vigni for clarifying the papers \cite{Ber95, LV} and Melanie Matchett Wood for answering a question related to counting imaginary quadratic fields.
DK was supported by a PIMS Postdoctoral Fellowship.
FS is supported by an NSF grant and a Simons grant.
We thank the anonymous referees for their detailed feedback on an earlier version of our manuscript.

\section{Preliminaries}
\label{section: preliminaries}
Let $p>3$ be a prime and $K=\Q(\sqrt{d})$ an imaginary quadratic field.
Denote by $K_{\ac}$ the anti-cyclotomic $\Z_p$-extension of $K$.
For $n\geq 0$, the \emph{$n$-th layer} is the unique number field $K_n$ such that $K\subseteq K_n \subset K_{\ac}$ and $[K_n:K]=p^n$.
Note that $K_n$ is Galois over $\Q$ and its Galois group $\Gal(K_n/\Q)$ is (isomorphic to) the dihedral group of order $2p^n$.

Denote by $\Gamma$ the Galois group $\Gal(K_{\ac}/K)$ and pick a topological generator $\gamma\in \Gamma$.
The \emph{Iwasawa algebra} $\Lambda$ is the completed group algebra $\Z_p\llbracket \Gamma \rrbracket :=\varprojlim_n \Z_p[\Gamma/\Gamma^{p^n}]$.
Fix an isomorphism of rings $\Lambda\simeq\Z_p\llbracket T\rrbracket$ by sending $\gamma -1$ to the formal variable $T$.

\subsection{}
Let $\EC_{/\Q}$ be an elliptic curve with good reduction at $p$ and of conductor $N_\EC$ so that the discriminant of $K=\Q(\sqrt{d})$ is coprime to $pN_\EC$.
The main objects of study in this paper are the \emph{minimal Selmer groups} defined in \cite[Section~3.1]{PW11}.
For the convenience of the reader, we work with a less technical but equivalent definition of these $p$-primary Selmer groups than that of \cite{PW11}\footnote{For the equivalence of the definitions in the setting of this paper, we refer the reader to \cite[Appendix~A]{PW11}. The two sentence summary of their discussion is that Selmer groups are defined via a collection of local conditions, which the authors call \textit{Selmer structures}. While these Selmer structures may be different at $K_n$, they become the same as one takes the limit and works with $K_{ac}$.}.

Choose a finite set of primes $S$ containing the primes $v|p$ in $K$, the archimedean primes, and the primes at which $\EC$ has bad reduction.
For any finite extension $L/K$, write $S(L)$ to denote the set of primes $w$ of $L$ such that $w$ lies above a prime $v\in S$.

For ease of notation, define
\[
J_v(\EC/L) = \prod_{w|v} H^1\left(L_w, \EC[p^\infty]\right)/\left(\EC(L_w)\otimes \Qp/\Zp\right),
\]
where the product is over all primes $w$ of $L$ lying above $v$.
Following R.~Greenberg \cite[p.~107]{Gre89} (see also \cite[p.~20]{Gre98_PCMS}), the \emph{$p$-primary Selmer group over $L$} is defined as follows
\[
\Sel_{p^\infty}(\EC/L):=\ker\left\{ H^1\left(L,\EC[p^{\infty}]\right)\longrightarrow \bigoplus_{v} J_v(\EC/L)\right\}.
\]
It is also possible to define the \emph{$p$-primary Selmer group} by using a smaller Galois group,
\[
\Sel_{p^\infty}(\EC/L):=\ker\left\{ H^1\left(K_S/L,\EC[p^{\infty}]\right)\longrightarrow \bigoplus_{v\in S} J_v(\EC/L)\right\}.
\]
The fact that these two definitions agree follows from \cite[Proposition~6.5 or Corollary~6.6]{Mil_ADT}.
For a detailed discussion, we refer the reader to \cite[Section 1.7 (Cassels-Poitou-Tate sequence)]{CS00_GCEC}.
Next, set $J_v(\EC/K_{\ac})$ to be the direct limit
\[
J_v(\EC/K_{\ac}):=\varinjlim_L J_v(\EC/L),
\]
where $L$ ranges over all number fields contained in $K_{\ac}$.
Taking direct limits, the \emph{$p$-primary Selmer group over $K_{\ac}$} can be defined as follows
\[
\Selpinf:=\ker\left\{ H^1\left(K_S/K_{\ac},\EC[p^{\infty}]\right)\longrightarrow \bigoplus_{v\in S} J_v(\EC/K_{\ac})\right\}.
\]
As explained earlier, over $K_{\ac}$ as well, we have an equivalent definition of the Selmer group,
\[
\Selpinf:=\ker\left\{ H^1\left(K_{\ac},\EC[p^{\infty}]\right)\longrightarrow \bigoplus_{v} J_v(\EC/K_{\ac})\right\}.
\]
Note that the map above is a map of $\Lambda$-modules.
A $\Lambda$-module $M$ is said to be \emph{cofinitely generated} (resp. \emph{cotorsion}) if its Pontryagin dual $M^{\vee}:=\Hom_{\Z_p}\left(M, \Q_p/\Z_p\right)$ is finitely generated (resp. torsion) as a $\Lambda$-module.
A standard application of Nakayama's lemma shows that $\Selpinf$ is cofinitely generated.
However, this Selmer group \emph{need not be} $\Lambda$-cotorsion.

\subsection{}
Let $\EC_{/\Q}$ be an elliptic curve with supersingular reduction at $p> 3$.
Set $\widehat{\EC}$ to be the formal group of $\EC$ over $\Z_p$.
Let $L$ be a finite extension of $\Q_p$ with valuation ring $\mathcal{O}_L$ and let $\widehat{\EC}(L)$ denote $\widehat{\EC}(\mathfrak{m}_L)$, where $\mathfrak{m}_L$ is the maximal ideal in $L$.
Let $v$ be a prime above $p$ in $K_n$.
Following S.~Kobayashi \cite{Kob03}, we define the plus (and minus) norm groups as follows
\begin{align*}
\widehat{\EC}^+(K_{n,v})& :=
\left\{P\in \widehat{\EC}(K_{n,v}) \mid \tr_{n/m+1} (P)\in \widehat{\EC}(K_{m,v}), \text{ for }0\leq m < n\text{ and }m \text{ even}\right\},\\
\widehat{\EC}^-(K_{n,v})&:=
\left\{P\in \widehat{\EC}(K_{n,v}) \mid \tr_{n/m+1} (P)\in \widehat{\EC}(K_{m,v}),\text{ for }0\leq m < n\text{ and }m 
\text{ odd}\right\},
\end{align*}
where $\tr_{n/m+1}:\widehat{\EC}(K_{n,v})\rightarrow \widehat{\EC}(K_{m+1,v})$ is the trace map with respect to the formal group law on $\widehat{\EC}$.
Define the \emph{plus (resp. minus) Selmer group} at the $n$-th layer of the $\Z_p$-extension as follows
\[
0 \rightarrow \Sel_{p^\infty}^{\pm}(\EC/K_n) \rightarrow \Sel_{p^\infty}(\EC/K_n) \rightarrow \prod_{v|p} \frac{H^1\left(K_{n,v},\EC[p^{\infty}]\right)}{\widehat{\EC}^{\pm}(K_{n,v})\otimes \Q_p/\Z_p}
\]
in view of $\left(\widehat{\EC}(L_v)^\pm\otimes \Qp/\Zp\right)\subset \left(\EC(L_v)\otimes \Qp/\Zp\right)$.
The plus (resp. minus) Selmer groups over $K_{\ac}$ are defined by taking direct limits, i.e.,
\[
\Sel_{p^\infty}^{\pm}(\EC/K_{\ac}) := \varinjlim_n \Sel_{p^\infty}^{\pm}(\EC/K_n).
\]

\subsection{}
Let $M$ be a finitely generated $\Lambda$-module and $M^\vee$ denote its Pontryagin dual.
%We introduce the Iwasawa invariants associated to $M$ (or $M^\vee$).
The \emph{Structure Theorem for $\Lambda$-modules} (see \cite[Theorem~13.12]{washington1997}) asserts that $M$ is pseudo-isomorphic to a finite direct sum of cyclic $\Lambda$-modules, i.e., there is a map of $\Lambda$-modules
\[
M\longrightarrow \Lambda^r\oplus \left(\bigoplus_{i=1}^s \Lambda/(p^{\mu_i})\right)\oplus \left(\bigoplus_{j=1}^t \Lambda/(f_j(T)) \right)
\]
with finite kernel and cokernel.
Here, $\mu_i>0$, and $f_j(T)$ are distinguished polynomials (monic polynomials with non-leading coefficients divisible by $p$).
The $\mu$-invariant of $M$ is defined as the power of $p$ in $ f_{M}(T) := p^{\sum_{i} \mu_i} \prod_j f_j(T)$.
More precisely,
\[
\mu_p(M):=\begin{cases}
\sum_{i=1}^s \mu_i & \textrm{ if } s>0\\
0 & \textrm{ if } s=0.
\end{cases}
\]
\begin{remark}
We are interested in the $\Lambda$-modules $\Sel_{p^\infty}(\EC/K_{\ac})$ (or $\Sel^{\pm}_{p^\infty}(\EC/K_{\ac})$) when they are $\Lambda$-cotorsion, see \cite[Theorem~1.3]{PW11}.
We write $\mu(\EC/K_{\ac})$ (or $\mu^{\pm}(\EC/K_{\ac})$) for the $\mu$-invariant of the appropriate Selmer group.
\end{remark}

\subsection{}
\label{introduce hypo CR}
Keeping the notation introduced earlier, write $N_\EC=N_\EC^+ N_\EC^-$ where $N_\EC^+$ is the product of the bad reduction primes that are split in $K$ and $N_\EC^-$ is the product of the bad reduction inert primes.
The following hypothesis guarantees the cotorsionness of the above remark in a large number of cases.
It was introduced in \cite{PW11} (see \cite[Assumption~1.1 and Remark~1.4]{KPW17} for correction).

\begin{labelledconj}{Hypothesis choired}{hyp:CR}
For a prime $p> 3$, this hypothesis is the following list of conditions:
\begin{enumerate}[\textup{(}1\textup{)}]
\item $\barrho_{\EC,p}:\Gal(\widebar{\Q}/\Q)\rightarrow \GL_2(\F_p)$ is surjective.
\item If $q$ is a prime with $q|N_\EC^-$ and $q\equiv \pm{1} \pmod{p}$, then $\barrho_{\EC,p}$ is ramified at $q$.
\item $N_\EC^-$ is square-free and the number of primes dividing $N_\EC^-$ is odd.
\item $a_p\not\equiv \pm 1\pmod{p}$.
\end{enumerate}
\end{labelledconj}

\begin{remark}In \cite{KPW17} and \cite{PW11}, various subsets of the above hypotheses are named `CR' -- CR stands for `controlled ramification' \cite{chanho}.
We make a few clarifying remarks.
In \cite{PW11}, Condition~(4) had been omitted.
Also, the results in \emph{loc. cit.} implicitly assumed that $\barrho_{\EC,p}$ is ramified at all primes dividing $N_\EC^+$.
This latter assumption of $N^+$-minimality can now be removed by a level-lowering trick introduced in \cite{KPW17}, see Section~1.2 in \emph{loc. cit.} for details.
\end{remark}

Finally, we recall the main theorems  of interest in \cite{PW11, KPW17}.

\begin{theorem}[{\cite[Theorem 1.3]{PW11} and \cite[Theorem 2.2]{KPW17}}]
Let $\EC$ be an elliptic curve with good reduction at a prime $p>3$ and conductor $N_{\EC}$ so that  \ref{hyp:CR} holds. 
\begin{enumerate}[\textup{(}1\textup{)}]
\item When $a_p=0$, let $K$ be an imaginary quadratic field so that the discriminant of $K$ is coprime to $pN_{\EC}$ and $p$ splits in $K$ into two primes that are totally ramified in $K_{\ac}/K$. Then $\Sel_{p^\infty}^{\pm}(\EC/K_{\ac})^{\vee}$ is $\Lambda$-torsion with $\mu^\pm(\EC/K_{ac})=0$.
\item When $a_p\not\in\{\pm 1, 0\}$, let $K$ be an imaginary quadratic field so that the discriminant of $K$ is coprime to $pN_{\EC}$. Then $\Sel_{p^\infty}(\EC/K_{\ac})^{\vee}$ is $\Lambda$-torsion with $\mu(\EC/K_{ac})=0$.
\end{enumerate}
%When $p$ is a prime of supersingular reduction, the last condition is \emph{automatically} satisfied.
%Pollack and Weston prove in \cite[Theorem~1.3]{PW11} that when discriminant of $K$ is coprime to $pN_{\EC}$ and $p$ splits in $K$ into two primes that are totally ramified in $K_{\ac}/K$, \ref{hyp:CR} guarantees that $\Sel_{p^\infty}^{\pm}(\EC/K_{\ac})^{\vee}$ is $\Lambda$-torsion.
%We can therefore associate with it the characteristic polynomial $f^{\pm}(T)$ and the $\mu$-invariant $\mu^{\pm}(\EC/K_{\ac})$.
%t is further proven in \cite[Theorem~1.1(2)]{PW11} that under the same hypotheses, $\mu^{\pm}(\EC/K_{\ac})=0$.

%When $p$ is a prime of good ordinary reduction, analogous results hold under \ref{hyp:CR}.
%However, there is no requirement for $p$ to split in $K$ or be totally ramified in $K_{\ac}/K$.
\end{theorem}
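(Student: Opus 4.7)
The plan is to reduce the statement to a $\mu$-invariant calculation for an anti-cyclotomic $p$-adic $L$-function and then invoke a one-sided divisibility in the anti-cyclotomic Iwasawa Main Conjecture; the conditions in \ref{hyp:CR} are tailored precisely to make both steps go through.

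First, condition (3) of Choired---that $N_\EC^-$ is squarefree with an odd number of prime divisors---is exactly the sign condition that places the Jacquet--Langlands transfer of $\EC$ on a definite quaternion algebra $B/\Q$ ramified at $\infty$ and at the primes of $N_\EC^-$. Pairing the resulting automorphic form $f^B$ against a compatible system of Gross points on the Shimura set of $B^\times$ assembles the Bertolini--Darmon element $\cL_p(\EC/K)\in\Lambda$. In the supersingular situation of (1), the assumption that $p$ splits in $K$ into two primes totally ramified in $K_\ac/K$ is what allows a Pollack-style decomposition into plus/minus components $\cL_p^\pm$, matching the algebraic $\pm$-Selmer decomposition on the Iwasawa side.

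Second, I must establish $\mu(\cL_p)=0$, or $\mu(\cL_p^\pm)=0$ in the supersingular case. Following the strategy of Vatsal and Chida--Hsieh, this reduces to showing that the mod-$p$ reduction of $f^B$ does not vanish at some Gross point. Residual surjectivity of $\barrho_{\EC,p}$ (condition (1)) makes the Galois orbit of Gross points large enough to apply an equidistribution argument modulo $p$; condition (2) prevents vanishing forced by the local behaviour at primes dividing $N_\EC^-$; and condition (4) ensures that $p$ is non-anomalous, so that the Euler factor at $p$ appearing in the Waldspurger-type specialization formula is a $p$-adic unit. This nonvanishing is the main obstacle, and in the supersingular case it must be executed while tracking the $\pm$-structure carefully---this is the refinement supplied by Kim--Pollack--Weston.

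With the nonvanishing in hand, I would invoke the one-sided anti-cyclotomic Main Conjecture of Bertolini--Darmon in case (2), or its $\pm$-analogue in case (1), to obtain a divisibility
\[
\Char_\Lambda\bigl(\Sel_{p^\infty}(\EC/K_{\ac})^\vee\bigr) \,\Big|\, \bigl(\cL_p(\EC/K)\bigr),
\]
and analogously for the $\pm$-Selmer groups in the supersingular case. Since the right-hand side is then a nonzero element with $\mu$-invariant zero, the characteristic ideal on the left is nonzero---so the Selmer group is $\Lambda$-cotorsion---and its $\mu$-invariant vanishes as required.
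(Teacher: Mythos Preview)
The paper does not prove this theorem; it is stated as a result recalled from \cite{PW11} and \cite{KPW17} and used as a black box thereafter. There is therefore no ``paper's own proof'' to compare your proposal against.

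That said, your sketch is a faithful high-level summary of the strategy in the cited references: the definite quaternion algebra construction of the Bertolini--Darmon anti-cyclotomic $p$-adic $L$-function, the Vatsal-type equidistribution argument for $\mu=0$, and the one-sided divisibility of the anti-cyclotomic Main Conjecture. If you intend to include this as an explanatory sketch rather than a self-contained proof, it would be appropriate, though you should flag it as such and cite the original sources for each step (e.g.\ Vatsal for the $\mu=0$ argument, Bertolini--Darmon and Skinner--Urban for the divisibility, and \cite{KPW17} for the $\pm$-refinement in the supersingular case).
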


\section{Average results: Varying over primes} 
\label{varying over primes}
In this short warm-up section, we study what happens when one fixes a pair $(\EC_{/\Q}, K)$ and varies $p$.
The main theorem shows that for almost all ordinary primes\footnote{and hence almost all primes}, the anti-cyclotomic Selmer groups are cotorsion with trivial $\mu$-invariant.  
We do this by proving that \ref{hyp:CR} holds for appropriately large ordinary primes, so that the aforementioned work of Pollack--Weston guarantees the desired $\Lambda$-torsionness and also guarantees the vanishing of the $\mu$-invariant(s).
Note that we require \emph{no} hypothesis on the Mordell--Weil rank of the elliptic curve over $\Q$ (or $K$).

The results of Pollack--Weston require that $K$ is an imaginary quadratic field of discriminant coprime to $pN_\EC$.
In the supersingular reduction case, it is also required that $p$ splits in $K$ and that the primes above $p$ are totally ramified in $K_{\ac}/K$.
\begin{remark}
\label{remark: brink ramified}
If $p$ does not divide the class number of $K$, denoted by $h_K$, then the primes above $p$ in $K$ are totally ramified in the anti-cyclotomic $\Zp$-extension, see \cite[p.~2131 last paragraph]{Bri07}.
\end{remark}

\begin{theorem}
\label{Th: CR-vary p}
Fix a pair $(\EC_{/\Q}, K)$, where $K$ is an imaginary quadratic field as above and $\EC_{/\Q}$ is an elliptic curve without complex multiplication so that $N_\EC^-$ is a product of an odd number of distinct primes.
Let $p>68 N_\EC(1+\log\log N_\EC)^{1/2}$ be a prime of good reduction.
\begin{enumerate}[\textup{(}1\textup{)}]
\item If $a_p=0$, $p$ splits in $K$ and $p\nmid h_K$, then the $p$-primary signed Selmer groups $\Sel^{\pm}_{p^\infty}(\EC/K_{\ac})$ are $\Lambda$-cotorsion with $\mu^{\pm}(\EC/K_{\ac})=0$.
\item If $a_p\not\equiv 0, \pm 1\pmod{p}$ and $p$ is unramified in $K$, then the $p$-primary Selmer group $\Sel_{p^\infty}(\EC/K_{\ac})$ is $\Lambda$-cotorsion with $\mu(\EC/K_{\ac})=0$.
\end{enumerate}
\end{theorem}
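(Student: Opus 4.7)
The plan is to reduce the statement to the theorems of Pollack--Weston and Kim--Pollack--Weston recalled at the end of Section~\ref{section: preliminaries}, by verifying that Hypothesis~\ref{hyp:CR} and the additional local conditions on $K$ required there all hold once $p$ exceeds the stated bound. Once those hypotheses are established, parts~(1) and (2) follow immediately from the supersingular and ordinary halves of that theorem, respectively.

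We first dispense with the easier parts of Hypothesis~\ref{hyp:CR}. Condition~(3) is part of the standing hypothesis placed on $\EC$. Condition~(4) demands $a_p \not\equiv \pm 1 \pmod p$: in part~(1) this is automatic because $a_p=0$ and $p>2$, and in part~(2) it is assumed outright. Condition~(2) is vacuous for the primes $p$ under consideration, since any prime $q \mid N_{\EC}^-$ satisfies $q \le N_{\EC} < p-1$ (thanks to the lower bound on $p$), and hence cannot be congruent to $\pm 1 \pmod p$; there is thus no prime at which $\barrho_{\EC,p}$ is required to be ramified by this clause.

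The substantive step is Condition~(1), namely the surjectivity of $\barrho_{\EC,p}\colon \Gal(\widebar{\Q}/\Q) \to \GL_2(\F_p)$. Because $\EC$ has no complex multiplication, Serre's open image theorem guarantees surjectivity for all but finitely many $p$; the specific numerical shape $p > 68 N_{\EC}(1+\log\log N_{\EC})^{1/2}$ in the hypothesis arises from invoking an explicit effective version of this result in the literature. This is the main obstacle in the sense that it alone dictates the threshold appearing in the theorem; everything else becomes trivially true once $p$ is sufficiently large compared to $N_{\EC}$.

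It remains to check the ancillary conditions that the Pollack--Weston theorem places on $K$. Coprimality of $\disc(K)$ with $N_{\EC}$ is a fixed condition on the pair $(\EC,K)$, while coprimality with $p$ is supplied by the hypothesis that $p$ splits in $K$ in part~(1), or is unramified in $K$ in part~(2). In the supersingular case, the additional assumption $p \nmid h_K$ combines with the splitting of $p$ in $K$, via Remark~\ref{remark: brink ramified}, to guarantee that both primes above $p$ are totally ramified in $K_{\ac}/K$, exactly as required by the supersingular half of the cited theorem. Applying that theorem then yields the $\Lambda$-cotorsionness of the appropriate Selmer group together with the vanishing of its $\mu$-invariant, completing the reduction.
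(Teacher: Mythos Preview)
Your proposal is correct and follows essentially the same route as the paper: both verify Hypothesis~\ref{hyp:CR} by noting conditions~(3) and~(4) are given, condition~(2) is vacuous once $p$ exceeds $N_{\EC}$, and condition~(1) follows from an effective form of Serre's open image theorem, after which one invokes the Pollack--Weston results together with Remark~\ref{remark: brink ramified} for the total ramification in the supersingular case. The only cosmetic difference is that the paper names Kraus explicitly as the source of the bound $68 N_{\EC}(1+\log\log N_{\EC})^{1/2}$, whereas you refer more generically to ``an explicit effective version of this result in the literature.''
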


\begin{proof}
We check for which primes the four criteria appearing in \ref{hyp:CR} hold.
The last two always hold by assumption.
The first two are satisfied for $ p >68 N_\EC(1+\log\log N_\EC)^{1/2}$:
\begin{enumerate}[\textup{(}i\textup{)}]
\item $\overline{\rho}_{\EC,p}$ is surjective:\\
Serre's Open Image Theorem (see for example \cite{Serre72}) asserts that for a fixed elliptic curve $\EC$ without complex multiplication there exists a positive constant $C_\EC$ such that for $p>C_\EC$ the mod-$p$ representation is surjective.
The bound $C_\EC\leq 68 N_\EC(1+\log\log N_\EC)^{1/2}$ is due to A.~Kraus \cite{kraus}.
Hence, this condition is satisfied as soon as $p>68 N_\EC(1+\log\log N_\EC)^{1/2}$.
\item If $q|N_\EC^{-}$ and $q\equiv \pm 1\pmod{p}$, then $\overline{\rho}_{\EC,p}$ is ramified at $q$:\\
For any of the finitely many primes $q$ dividing $N_\EC^-$, the condition $q\equiv \pm 1 \pmod{p}$ is never satisfied for $p\gg 0$, e.g. $p>68N^-_\EC$.
In particular, this condition is vacuous for all good reduction primes $p>68 N(1+\log\log N)^{1/2}$.
\end{enumerate}
We thus conclude that \ref{hyp:CR} is satisfied for all sufficiently large $p$ in either case.

To complete the proof we apply \cite[Theorems~1.1 and 1.3]{PW11}.
Note that in (1), Remark~\ref{remark: brink ramified} ensures that $p$ is totally ramified in $K_{\ac}/K$.
This allows using the said results from \emph{loc. cit}.
\end{proof}

\begin{remark}
\begin{enumerate}[\textup{(}1\textup{)}]
\item A.~Cojocaru has obtained bounds similar to that of Kraus in \cite[Theorem~2]{Coj05}.
Thus, our theorem may be improved by replacing Kraus's bound $68 N_\EC(1+\log\log N_\EC)^{1/2}$ by Cojocaru's $\frac{4\sqrt{6}}{3}N_\EC\displaystyle{\prod_{p|N_\EC}\left(1+\frac{1}{p}\right)^\frac{1}{2}}$ whenever it is smaller.
\item It is conjectured that $C_\EC=37$, see \cite[p.~399]{serrecebo}.
Consequently, the Selmer group $\Sel_{p^\infty}^{\pm}(\EC/K_{\ac})$ (resp. $\Sel_{p^\infty}(\EC/K_{\ac})$) would be $\Lambda$-cotorsion as soon as $p>\max\{N_\EC^-+1,37, h_K\}$ (resp. $p>\max\{N_\EC^-+1,37\}$) and $p$ splits (resp. is unramified) in $K$.
\item Elkies proved that given $\EC_{/\Q}$, there are infinitely many primes at which it has supersingular reduction \cite[Theorem~1]{Elk87}.
By the Chebotarev density theorem, half of the primes split in $K$.
A priori it is not obvious that given a pair $(\EC_{/\Q}, K)$ there are infinitely many primes of supersingular reduction of $\EC$ which split in $K$.
It is possible to find non-CM elliptic curves over $\Q$ for which only finitely many supersingular primes split in a given \emph{imaginary quadratic} field.
For example (see \cite[p.~1]{Wal10}) the supersingular primes of \href{https://www.lmfdb.org/EllipticCurve/Q/15/a/7}{$X_1(15)$}, given by the equation
\[
Y^2 + XY + Y = X^3 + X^2,
\]
satisfy the property that $p\equiv 3 \pmod{4}$\footnote{Note that not all primes of the form $p\equiv 3 \pmod{4}$ are supersingular.}.
However, such primes do not split in $K=\Q(i)$.
In \cite[Section~4.1]{Wal10}, it is explained that in any real (resp. imaginary) quadratic field $K$, there is a bias \emph{in favour of} (resp. \emph{against}) the occurrence of supersingular primes that split in the field.
However, the averaging results in \emph{loc. cit.} suggest that if $\EC$ is a non-CM elliptic curve picked \emph{at random} then there is a positive proportion of supersingular primes which split in $K$.
\end{enumerate}
\end{remark}

\section{Average results: Varying the imaginary quadratic field}
\label{section: Varying K}
Fix an elliptic curve $\EC_{/\Q}$ without complex multiplication of square-free conductor $N_\EC$ and a prime $p> 3$ of good reduction with $a_p\not\equiv \pm{1}\pmod{p}$.
In this section, we count for what proportion of imaginary quadratic fields the associated Selmer groups are $\Lambda$-cotorsion with $\mu$-invariant equal to 0.
We prove the theorem separately when $p$ is a prime of supersingular or ordinary reduction.

\begin{remark}
	The methods in this section have been written so that they can be extended to the case of weight 2 modular forms. 
\end{remark}

%\DK{\begin{remark} The results in this section, go through without any changes for a modular form $f$ of weight 2. \end{remark}}

\subsection{The supersingular case}
In this section, we assume $a_p=0$ (which is equivalent to $p$ being supersingular, since $p>3$.)
Varying over \emph{imaginary quadratic} fields $\Q(\sqrt{d})$ we estimate how often $\Sel_{p^\infty}^{\pm}(\EC/\Q(\sqrt{d})_{\ac})$ is $\Lambda$-cotorsion with $\mu^{\pm}(\EC/\Q(\sqrt{d})_{\ac})=0$.
To this end, we estimate for what proportion of \emph{imaginary quadratic} fields the following properties hold:
\begin{enumerate}[\textup{(}1\textup{)}]
\item $\gcd\left(pN_\EC, \abs{\Dd}\right)=1$,
\item \ref{hyp:CR} is satisfied by the triple $(\EC_{/\Q}, \Q(\sqrt{d})_{\ac}, p)$, \emph{and}
\item $p$ splits in $\Q(\sqrt{d})$.
\item $p$ does not divide the class number of $\Q(\sqrt{d})$.
\end{enumerate}
As for the last property, the \emph{Cohen--Lenstra heuristics} predict that among all imaginary quadratic fields, the proportion for which $p$ \emph{divides} the class number is \cite{bhandmurty}, \cite[Section 9.I]{CohenLenstra}
\begin{equation}
\label{Cohen Lenstra imaginary quadratic}
c_p = \frac{6}{\pi^2}\left( 1 - \prod_{j=1}^\infty \left(1 -\frac{1}{p^j} \right)\right).
\end{equation}
We remind the readers that this proportion is expected to be \emph{positive}.
A result of K.~Horie and Y.~Onishi (see \cite{HO88}) establishes that there are infinitely many imaginary quadratic number fields such that $p$ \emph{does not} divide the class number.
In \cite{KO99}, W.~Kohnen and K.~Ono have obtained lower bound asymptotic but we are still quite far from establishing the Cohen--Lenstra heuristics.

%The main result of this section is Theorem~\ref{thm: vary K hypothesis CR} which generalizes \cite[Theorem~6.3]{HKR21}.

\begin{defi}
Let $\mathcal{S}$ be a subset of imaginary quadratic fields.
Define the \emph{density of $\mathcal{S}$} as
\[
\delta(\mathcal{S}):=\lim_{x\rightarrow \infty} \frac{\#\left\{\Q(\sqrt{d}): \abs{\Dd}<x \textrm{ and } \Q(\sqrt{d})\in \mathcal{S}\right\}}{\#\left\{\Q(\sqrt{d}): d<0, \ \abs{\Dd}<x\right\}}.
\]
\end{defi}

\begin{defi}
\label{ss choired defi}
Given a pair $(\EC_{/\Q},p)$ of an elliptic curve $\EC$ of conductor $N_\EC$ and a prime $p$ of good reduction, define $Q^-(\textrm{choired}, p+)$ as the following set
\small{\[
\left\{\Q(\sqrt{d}):d<0, \ \gcd\left(\abs{\Dd},pN_\EC\right)=1, \ p \textrm{ splits in }\Q(\sqrt{d}), \textrm{ \ref{hyp:CR} holds for } (\EC_{/\Q}, \Q(\sqrt{d}), p)\right\}.
\]	}
\end{defi}	

\begin{defi}
\label{kountertoCR}
Given an elliptic curve $\EC_{/\Q}$ of conductor $N_\EC$ and a prime $p$, define $k$ to be the number of bad primes $q| N_\EC$ that satisfy both of the following:
\begin{enumerate}[\textup{(}1\textup{)}]
\item $q \equiv \pm 1\pmod{p}$,
\item $\bar{\rho}_{\EC,p}$ is unramified at $q$.
\end{enumerate}
\end{defi}

\begin{remark}
In words, $k$ counts the number of bad primes $q$ that would defy the key assumption of \cite{KPW17} if $q$ were inert in the quadratic imaginary field to be chosen.
Of course, since we are working under their assumption, the primes in Definition \ref{kountertoCR} must split -- they are `kounterexamples,' i.e. fake counterexamples to the key assumption of \cite{KPW17}, hence the choice of the letter $k$.
\end{remark}
%\blue{This used to be something weaker: (1) satisfied, but (2) not. We are encoding counterexample primes to the CR condition, no? So I think (1) not satisfied, (2) satisfied, should also be included. Please correct if appropriate.}

\begin{theorem}
\label{thm: vary K hypothesis CR}
Fix a pair $(\EC_{/\Q},p)$ so that 
\begin{enumerate}[\textup{(}1\textup{)}]
\item $\EC_{/\Q}$ is an elliptic curve with square-free conductor $N_\EC = \prod_{i=1}^r q_i$, and
\item $p> 3$ is a prime at which $\EC$ has good supersingular reduction, $\overline{\rho}_{\EC,p}$ is surjective, and $k<r$.
\end{enumerate}	
Then
\[
\delta\left(Q^-(\textrm{\emph{choired, $p+$}})\right)=\frac{pN_\EC}{2^{k+2} (p+1) \prod_{q_i|N_\EC} (q_i + 1)}.
\]
Let $c^{*}_p$ denote the proportion of imaginary quadratic fields in $Q^-(\textrm{\emph{choired, $p+$}})$ with $p$ dividing the class number.
The proportion of imaginary quadratic fields with $\gcd\left(\abs{\Dd},pN_\EC\right) = 1$, the prime $p$ splits in $\Q(\sqrt{d})$, and $\Sel_{p^\infty}^{\pm}(\EC/\Q(\sqrt{d})_{\ac})$ is $\Lambda$-cotorsion with $\mu^{\pm}$-invariant equal to zero is \emph{at least}
\[
\frac{pN_\EC}{2^{k+2} (p+1) \prod_{q_i|N_\EC} (q_i + 1)}\cdot (1-c^*_p).
\]
\end{theorem}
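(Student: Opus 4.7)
The plan is to execute the three-step strategy described in the introduction, combining Lemma~\ref{residueclasses} with Proposition~\ref{lemma: count imaginary quadratic with disc coprime to pNE}. First I would verify that the $K$-independent conditions of \ref{hyp:CR} are automatic from the hypotheses: condition (1) is the assumed surjectivity of $\bar\rho_{\EC,p}$; condition (3) follows from $N_\EC$ being squarefree combined with the ``odd number of inert bad primes'' condition imposed below; and condition (4) is immediate from $a_p=0$ together with $p>3$. The conditions in the definition of $Q^-(\text{choired},p+)$ that actually depend on $K$ then reduce to: (i) $\gcd(|D_{\Q(\sqrt d)}|,pN_\EC)=1$, so each prime in $\{p,q_1,\dots,q_r\}$ is unramified in $K$; (ii) $p$ splits in $K$; (iii) each of the $k$ kounterexample primes splits in $K$, for otherwise it lies in $N_\EC^-$ and falsifies condition (2) of \ref{hyp:CR}; and (iv) the number of inert bad primes is odd (definite case).

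Next I would carry out the combinatorics. Under condition (i), each prime in $\{p,q_1,\dots,q_r\}$ is either split or inert in $K$, producing $2^{r+1}$ splitting types. Among these, the number satisfying (ii)--(iv) is exactly $2^{r-k-1}$: one choice for $p$ (split), one choice for each of the $k$ kounterexample primes (split), and the remaining $r-k\geq 1$ primes must contain an odd number of inert ones, contributing a factor of $2^{r-k-1}$. This is a fraction $1/2^{k+2}$ of all $2^{r+1}$ splitting types. By Lemma~\ref{residueclasses} each splitting type corresponds to a prescribed fraction $1/2^{r+1}$ of the residue classes mod $pN_\EC$ coprime to $pN_\EC$. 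Proposition~\ref{lemma: count imaginary quadratic with disc coprime to pNE}, resting on the Prachar--Humphries estimates applied class-by-class, asserts that the density of imaginary quadratic fields with discriminant coprime to $pN_\EC$ equals $\frac{pN_\EC}{(p+1)\prod_i(q_i+1)}$, and that this mass distributes uniformly across admissible residue classes. Summing the contributions of the $2^{r-k-1}$ valid splitting types therefore gives
\[
\delta\bigl(Q^-(\text{choired},p+)\bigr) \;=\; \frac{2^{r-k-1}}{2^{r+1}}\cdot\frac{pN_\EC}{(p+1)\prod_i(q_i+1)} \;=\; \frac{pN_\EC}{2^{k+2}(p+1)\prod_{i=1}^r(q_i+1)},
\]
which is the first claim.

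For the second claim, I would observe that if $K\in Q^-(\text{choired},p+)$ additionally satisfies $p\nmid h_K$, then by Remark~\ref{remark: brink ramified} the primes of $K$ above $p$ are totally ramified in $K_{ac}/K$, so all hypotheses of the Pollack--Weston theorem are met and $\Sel_{p^\infty}^{\pm}(\EC/K_{ac})$ is $\Lambda$-cotorsion with $\mu^{\pm}=0$. Since by definition a proportion $c_p^*$ of the fields in $Q^-(\text{choired},p+)$ have $p\mid h_K$, the density of fields satisfying all four properties listed in the theorem is at least $\delta(Q^-(\text{choired},p+))\cdot(1-c_p^*)$, as claimed. The main technical obstacle is internal to Proposition~\ref{lemma: count imaginary quadratic with disc coprime to pNE}, namely ensuring that the Prachar--Humphries estimates yield the correct density uniformly over each coprime residue class mod $pN_\EC$ rather than merely in aggregate; once that is available, the rest of the argument is combinatorial bookkeeping.
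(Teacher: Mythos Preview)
Your proposal is correct and follows essentially the same route as the paper. You verify the $K$-independent parts of \ref{hyp:CR}, decompose $Q^-(\text{choired},p+)$ according to the splitting type of the primes in $\Pi_p(N_\EC)$, count the admissible types as $2^{r-k-1}$ via the odd-subset identity, invoke Lemma~\ref{residueclasses} and Proposition~\ref{lemma: count imaginary quadratic with disc coprime to pNE} to obtain the density $\frac{1}{2^{r+1}}\cdot\frac{pN_\EC}{(p+1)\prod_i(q_i+1)}$ per type, and then appeal to Remark~\ref{remark: brink ramified} and Pollack--Weston for the second assertion; this is exactly the paper's argument, only phrased in terms of splitting types rather than the indexing set $\mathcal{N}$ of admissible $\Pi^-$. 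Your explicit remark that the equidistribution across residue classes is really a consequence of the Prachar--Humphries estimate (and not just of the aggregate count in Proposition~\ref{lemma: count imaginary quadratic with disc coprime to pNE}) is a point the paper leaves implicit.
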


First, we introduce some notation.
\begin{defi}
Define $\Pi_p(N_\EC)$ to be the set of prime divisors of $N_\EC$ together with $p$, i.e.,
\[
\Pi_p(N_\EC) = \{p, q_1, \ldots, q_r\}.
\]
Choose the indices so that $q_i$ with $i\leq k$ are the primes that satisfy the conditions of Definition~\ref{kountertoCR}.
This choice allows keeping track of primes that potentially violate condition \textup{(}2\textup{)} in \ref{hyp:CR}.
\end{defi}

\begin{defi}
\label{defn partition of Pi}
For any partition $\Pi=\Pi^-\sqcup \Pi^+$ of $\Pi_p(N_\EC)$ into two disjoint parts, define 
\begin{multline*}
Q^-(\Pi) := \left\{\Q(\sqrt{d}): d<0, \ \gcd\left( \abs{\Dd}, pN_\EC\right)=1, \textrm{ primes in }\Pi^- \textrm{ are inert in }\Q(\sqrt{d}),  \right.\\
\left. \textrm{ and primes in }\Pi^+ \textrm{ split in }\Q(\sqrt{d}) \right\}.
\end{multline*}
Denote by $\delta_{\Pi}$ the density of $Q^-(\Pi)$, i.e. $\delta_{\Pi}:=\delta\left(Q^-\left(\Pi\right)\right)$.
%\[
%\delta_{\Pi}:=\delta\left(Q^-\left(\Pi\right)\right)=\lim_{x\rightarrow \infty} \frac{\# \left\{\Q(\sqrt{d}): \Q(\sqrt{d})\in Q^-\left(\Pi\right) \textrm{ and } \abs{\Dd}<x \right\}}{\#\left\{\Q(\sqrt{d}) : \ d<0, \ \abs{\Dd}<x\right\}}.%, \textrm{ and } \gcd\left(\abs{\Dd},pN_\EC\right)=1\right\}}.
%\]
\end{defi}

\begin{lemma}
\label{residueclasses}
Pick a partition $\Pi=\Pi^-\sqcup \Pi^+$ of $\Pi_p(N_\EC)$ into two disjoint parts.
\begin{enumerate}
\item When $2\nmid N_\EC$, there exists a subset $\mathfrak{r}_{\Pi}$ of $(\Z/pN_\EC \Z)^*$ of size $\frac{\varphi(pN_\EC)}{2^{r+1}}$ so that 
\begin{align*}
\Q(\sqrt{d})\in Q^-(\Pi)\iff &\Dd \mod pN_\EC\in \mathfrak{r}_{\Pi}.
\end{align*}
\label{residueclasses2}
\item When $2\mid N_\EC$, there exists a subset $\mathfrak{r}_{\Pi}$ of $(\Z/4pN_\EC \Z)^*$ of size $\frac{\varphi(pN_\EC)}{2^{r}}$ so that 
\begin{align*}
\Q(\sqrt{d})\in Q^-(\Pi)\iff &\Dd \mod 4pN_\EC\in \mathfrak{r}_{\Pi}.
\end{align*}
\end{enumerate}
In either case, $Q^-(\Pi)$ corresponds to $\frac{1}{2^{r+1}}$ of the possible residue classes for discriminants of quadratic imaginary fields coprime to $pN_\EC$.
\end{lemma}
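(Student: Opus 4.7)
The plan is to translate each defining condition of $Q^-(\Pi)$ into a congruence condition on the fundamental discriminant $\Dd$, and then count the resulting residue classes via the Chinese Remainder Theorem. The classical splitting criteria in a quadratic field are the key inputs: for an odd prime $\ell\nmid\Dd$, the prime $\ell$ splits or is inert in $\Q(\sqrt{d})$ according as $\Dd$ is a quadratic residue or non-residue modulo $\ell$; for $\ell=2$ and $\Dd$ odd, the prime $2$ splits if $\Dd\equiv 1\pmod 8$ and is inert if $\Dd\equiv 5\pmod 8$; and $\ell$ ramifies iff $\ell\mid\Dd$. Observe that once every $\ell\in\Pi_p(N_\EC)$ is split or inert (not ramified), the coprimality condition $\gcd(|\Dd|,pN_\EC)=1$ is automatic, so I may translate membership in $Q^-(\Pi)$ into splitting/inertness data at each prime in $\Pi_p(N_\EC)$.

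In the case $2\nmid N_\EC$, every prime in $\Pi_p(N_\EC)$ is odd, so the collection of conditions is determined by $\Dd\bmod pN_\EC$. Via CRT, $(\Z/pN_\EC\Z)^* \cong \prod_{\ell\in\Pi_p(N_\EC)}(\Z/\ell\Z)^*$, and at each factor the condition on $\Dd$ picks out exactly $(\ell-1)/2$ of the $\ell-1$ residues (quadratic residues if $\ell\in\Pi^+$, non-residues if $\ell\in\Pi^-$). Taking the product yields the set $\mathfrak{r}_\Pi$ of size $\prod_{\ell\in\Pi_p(N_\EC)}\frac{\ell-1}{2} = \varphi(pN_\EC)/2^{r+1}$, establishing (1).

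In the case $2\mid N_\EC$, the condition at $\ell=2$ involves $\Dd\bmod 8$ rather than $\Dd\bmod 2$, so I work modulo $4pN_\EC$. By CRT, $(\Z/4pN_\EC\Z)^*\cong (\Z/8\Z)^*\times\prod_{\ell\in\Pi_p(N_\EC)\setminus\{2\}}(\Z/\ell\Z)^*$. The odd-prime factors contribute $(\ell-1)/2$ residues each, while the $(\Z/8\Z)^*$ factor contributes exactly one (either $\{1\}$ or $\{5\}$). Using $\varphi(pN_\EC)=(p-1)\prod_{q\mid N_\EC,\, q\text{ odd}}(q-1)$, this gives $|\mathfrak{r}_\Pi|=\varphi(pN_\EC)/2^r$, establishing (2).

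For the final proportion, note that when $2\nmid N_\EC$ every class in $(\Z/pN_\EC\Z)^*$ is realized as $\Dd\bmod pN_\EC$ for some fundamental discriminant coprime to $pN_\EC$ (any such residue can be lifted by adjusting the $2$-adic shape of $d$), so $\mathfrak{r}_\Pi$ takes up the fraction $\frac{1}{2^{r+1}}$ of admissible residues. When $2\mid N_\EC$ the constraint $\gcd(\Dd,2)=1$ forces $\Dd\equiv 1$ or $5\pmod 8$, so the admissible residues mod $4pN_\EC$ form a subset of size $2\varphi(pN_\EC)$; comparing with $|\mathfrak{r}_\Pi|=\varphi(pN_\EC)/2^r$ again gives proportion $\frac{1}{2^{r+1}}$. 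There is no deep obstacle here: the main point requiring care is the distinguished behaviour of the prime $2$, which forces the change of modulus between the two cases but does not alter the final proportion.
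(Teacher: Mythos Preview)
Your proof is correct and follows essentially the same approach as the paper: translate membership in $Q^-(\Pi)$ into Kronecker-symbol (quadratic residue) conditions at each prime of $\Pi_p(N_\EC)$, apply the Chinese Remainder Theorem, and count $(\ell-1)/2$ admissible residues at each odd prime while handling $\ell=2$ separately via $\Dd\bmod 8$. The paper's argument is the same, citing \cite[Proposition~5.16]{cox} for the splitting criterion and writing out the product $\mathfrak{r}_\Pi=\prod_{q\in\Pi^-}\mathfrak{r}_q^-\times\prod_{q\in\Pi^+}\mathfrak{r}_q^+$ explicitly; your treatment of the final proportion (noting that admissible discriminant residues mod $4pN_\EC$ have size $2\varphi(pN_\EC)$ in the even case) matches the paper's closing remark as well.
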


\begin{proof}
From \cite[Proposition 5.16]{cox}, we have that $\Q(\sqrt{d})\in Q^-(\Pi)$ if and only if for all $q\in \Pi^-$, the Kronecker symbol $\left(\frac{\Dd}{q}\right)=-1$ and for all $q\in \Pi^+$, the Kronecker symbol $\left(\frac{\Dd}{q}\right)=+1$.

We first handle the case $2\nmid N_\EC$.
For $q\in \Pi_p(N_\EC)$, denote by $\mathfrak{r}_{q}^+\subset (\Z/q\Z)^*$ the set of quadratic residues, and by $\mathfrak{r}_{q}^-$ the set of quadratic non-residues.
Note that for an odd prime $q$ we have that $\#\mathfrak{r}_{q}^+=\#\mathfrak{r}_q^-=\frac{q-1}{2}$, since exactly half the elements of $(\Z/q\Z)^*$ are squares.
Define
\begin{equation*}
%\mathfrak{r}_{\Pi}:=\prod_{q\in \Pi_p(N_\EC)} \mathfrak{r}_+^q\subset \prod_{q\in \Pi_p(N_\EC)} (\Z/q\Z)^* \cong (\Z/pN_\EC \Z)^*.
\mathfrak{r}_{\Pi}:=\prod_{q\in\Pi^-}\mathfrak{r}_q^-\times \prod_{q\in\Pi^+}\mathfrak{r}_q^+\subset \prod_{q\in \Pi_p(N_\EC)} (\Z/q\Z)^* \cong (\Z/pN_\EC \Z)^*.
\end{equation*}

Thus, $\Q(\sqrt{d})\in Q^-(\Pi)$ if and only if $\Dd \mod pN_\EC \in \mathfrak{r}_{\Pi}$, as claimed.
It also follows that 
\[
\# \mathfrak{r}_{\Pi} = \frac{(p-1)}{2}\prod_{i=1}^r \frac{q_i-1}{2}=\frac{\varphi(pN_\EC)}{2^{r+1}}.
\]

To handle the case $2\mid N_\EC$, set
\[
\mathfrak{r}_2^+:=\{1\}\in\left(\Z/8\Z\right)^*
\]
Note that by definition of the Kronecker symbol, $ \Dd\mod 8\in\{1\}\subset\left(\Z/8\Z\right)^*$ if and only if $\left(\frac{\Dd}{2}\right)=1$.
We can then proceed analogously to the case when $N_\EC$ was odd, working with
\begin{equation*}
%\mathfrak{r}_{\Pi}:=\prod_{q\in \Pi_p(N_\EC)} \mathfrak{r}_+^q\subset \left(\Z/8\Z\right)^*\times \prod_{\text{odd }q\in \Pi_p(N_\EC)} (\Z/q\Z)^* \cong (\Z/4pN_\EC \Z)^*.
\mathfrak{r}_{\Pi}:=\prod_{q\in\Pi^-}\mathfrak{r}_{q}^-\times \prod_{q\in\Pi^+}\mathfrak{r}_q^+\subset \left(\Z/8\Z\right)^*\times \prod_{\text{odd }q\in \Pi_p(N_\EC)} (\Z/q\Z)^* \cong (\Z/4pN_\EC \Z)^*.
\end{equation*}

For the last assertion, note that when $N_\EC$ is odd, $\Dd$ can reduce to any element in $(\Z/pN_\EC\Z)^*$, while for $N_\EC$ even, the reduction in the $(\Z/8\Z)^*$ part lies in $\{1,5\}$ as $2$ does not ramify by assumption.
\end{proof}

\begin{proposition}
\label{lemma: count imaginary quadratic with disc coprime to pNE}
Let $M$ be any square-free integer.
Define
\[
Q^-\left(x,D\perp M\right):= \left\{ \Q(\sqrt{d}) \textrm{ imaginary} : \ \abs{\Dd}< x \text{ and }\gcd\left(\Dd, M\right)=1 \right\}.
\]
Then asymptotically, 
\[
\lim_{x\rightarrow \infty}\# Q^-\left(x,D\perp M\right)\sim \frac{1}{2}\frac{x}{\zeta(2)}\frac{M}{ \prod_{q|M}(q +1)}
\]
\end{proposition}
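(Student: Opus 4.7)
The plan is to decompose $Q^-(x, D\perp M)$ along congruence classes and apply an equidistribution estimate for squarefree integers in arithmetic progressions, such as the classical theorem of K.~Prachar (and the refinement due to P.~Humphries). The underlying observation is that every imaginary quadratic field equals $\Q(\sqrt{d})$ for a unique negative squarefree $d$, with fundamental discriminant $D = d$ when $d \equiv 1 \pmod 4$ and $D = 4d$ when $d \equiv 2, 3 \pmod 4$, so the coprimality condition $\gcd(D, M) = 1$ translates into congruence conditions on $d$.

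The steps I would carry out are as follows. First, partition the count according to $d \bmod 4$: case (a) $d \equiv 1 \pmod 4$ contributes $D = d$ with $|d| \leq x$; cases (b) $d \equiv 2 \pmod 4$ and (c) $d \equiv 3 \pmod 4$ each contribute $D = 4d$ with $|d| \leq x/4$. When $M$ is odd all three cases occur and $\gcd(D, M) = 1 \Leftrightarrow \gcd(d, M) = 1$, whereas when $M$ is even only case (a) can satisfy $\gcd(D, M) = 1$. Second, apply Prachar's theorem to each admissible residue class $a \pmod q$ with $\gcd(a,q)=1$:
\[
\#\{n \leq X : n \text{ squarefree},\ n \equiv a \pmod{q}\} \sim \frac{X}{\zeta(2)\,q}\prod_{p \mid q}\bigl(1 - p^{-2}\bigr)^{-1},
\]
taking $n = |d|$, with $q$ a suitable multiple of the odd part of $M$ (with an extra factor of $4$ or $2$ to encode the congruence modulo $4$) and with $X \in \{x, x/4\}$ according to the case. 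Sum these over the $\varphi(M)$ admissible residue classes in each sub-case.

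The third step is algebraic simplification. Using $\varphi(M) = \prod_{q \mid M}(q-1)$ together with the identity
\[
\frac{\varphi(M)}{M}\prod_{q \mid M}\frac{q^2}{q^2-1} = \prod_{q \mid M}\frac{q}{q+1},
\]
the three contributions combine (the prefactors $\tfrac{1}{3}, \tfrac{1}{12}, \tfrac{1}{12}$ summing to $\tfrac{1}{2}$) to yield the main term $\frac{xM}{2\zeta(2)\prod_{q \mid M}(q+1)}$; the even-$M$ case is entirely analogous but simpler. The main obstacle is the bookkeeping: separating by the parity of $M$ and by $d \bmod 4$, keeping track of the different moduli ($4M$ versus $2M$) and truncation bounds ($x$ versus $x/4$) in each Prachar input, and verifying that the Euler factors at $2$ combine precisely into the factor $\tfrac{1}{2}$ present in the claimed asymptotic.
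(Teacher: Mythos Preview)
Your proposal is correct and follows essentially the same route as the paper's own proof: both split according to $d\bmod 4$, invoke the Prachar--Humphries estimate for squarefree integers in arithmetic progressions with modulus a suitable multiple of $M$, treat the cases $2\nmid M$ and $2\mid M$ separately, and arrive at the same prefactors $\tfrac{1}{3},\tfrac{1}{12},\tfrac{1}{12}$ summing to $\tfrac{1}{2}$. The only point you leave implicit is how to handle case~(b) ($d\equiv 2\pmod 4$), where the residue class is not coprime to the modulus $4M$; the paper deals with this by writing $d=2d'$ and counting the odd squarefree $d'$ instead, which you should make explicit when carrying out the details.
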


\begin{proof}
A result of Prachar \cite[formula~1]{Pra58} (see \cite{MSE_PH} for two proofs by Humphries) says that
\[
\lim_{x\rightarrow\infty}\# \left\{ n \textrm{ square-free} : \ 0< n< x, \ n\equiv a \pmod{b} ,\ \gcd(b,a)=1\right\} \sim \frac{x}{\zeta(2)}\frac{1}{b}\prod_{q\mid b}\left(1-\frac{1}{q^2}\right)^{-1},
\]
where $a$ and $b$ are integers, and the $q$'s are primes.

\item We first handle the case $2\nmid M$.\newline
Put $b=4M$.
Then there are $\displaystyle{\prod_{q|M}(q-1)}$ congruence classes $a\pmod b$ with $\gcd(a,4M)=1$ and $a\equiv 1 \mod 4$, over which we sum the above Prachar--Humphries estimate.
Therefore,
\begin{align*}
&\lim_{x\rightarrow\infty}\# \left\{ n \textrm{ square-free} : \ \ n\equiv 1\pmod{4}, \ 0<n< x, \ \gcd\left( n , M\right)=1 \right\} \\
 \tag{*}\label{star}\sim & \prod_{q|M}(q-1)\times \frac{x}{\zeta(2)}\frac{1}{4M}\prod_{q|4M}\left(1- \frac{1}{q^2}\right)^{-1}\\
= & \prod_{q|M}(q-1)\times \frac{x}{\zeta(2)}\frac{1}{4M}\frac{4}{3}\prod_{q|M} \frac{q^2}{q^2-1} \\ 
= &\frac{x}{\zeta(2)}\frac{1}{3}\frac{M}{ \prod_{q|M}(q+1)}.
\end{align*}
The same estimate works for the congruence class $n\equiv 3\pmod{4}$.
To handle the congruence class $n\equiv 2\pmod{4}$, note that $n\equiv 2\pmod{4}$ with $(n,M)=1$ is square-free if and only if $\frac{n}{2}$ is.
But $\frac{n}{2}$ can be $\equiv 1\pmod{4}$ or $\equiv 3\pmod{4}$, so by using the above argument, 
\begin{align*}
&\lim_{x\rightarrow\infty}\# \left\{ n \textrm{ square-free} : \ \ n\equiv 2\pmod{4}, \ 0<n< x, \ \gcd\left( n , M\right)=1 \right\} \\
= &\lim_{x\rightarrow\infty}\# \left\{ n \textrm{ square-free} : \ \ n\equiv 1\text{ or } 3\pmod{4}, \ 0<n< \frac{x}{2}, \ \gcd\left( n , M\right)=1 \right\}\\
\sim &2\times\frac{x/2}{\zeta(2)}\frac{1}{3}\frac{M}{ \prod_{q|M}(q+1)}.
\end{align*}

Since we are assuming that $2\nmid M$, note that $\gcd\left(\abs{\Dd}, M\right)=1 \iff \gcd(\abs{d}, M)=1$.
Hence, we are interested in the following estimate: 
\begin{align*}
& \tag{**}\label{star2}\lim_{x\rightarrow\infty}\# \left\{ d \textrm{ square-free} : \ d<0, \ \abs{\Dd}< x, \ \gcd\left( \abs{d} , M\right)=1 \right\}\\
= & \lim_{x\rightarrow\infty}\# \left\{ d \textrm{ square-free} : \ d<0, \ \abs{d}< x, \ \gcd\left( \abs{d} , M\right)=1, \ d\equiv 1\pmod{4} \right\} \cup \\ 
& \lim_{x\rightarrow\infty}\# \left\{ d \textrm{ square-free} : \ d<0, \ \abs{d}< \frac{x}{4}, \ \gcd\left( \abs{d} , M\right)=1, \ d\equiv 2,3 \pmod{4} \right\}\\
\sim & \left(\frac{1}{3}\frac{x}{\zeta(2)}+\frac{1}{3}\frac{x/4}{\zeta(2)}+\frac{1}{3}\frac{x/4}{\zeta(2)}\right)\left(\frac{M}{ \prod_{q|M}(q +1)}\right)=\frac{1}{2}\frac{x}{\zeta(2)}\frac{M}{ \prod_{q|M}(q +1)}.
\end{align*}

\item The case $2|M$ is a bit easier. \newline
The simplification appears when decomposing \eqref{star2}  into $\pmod{4}$ congruence classes.
Since $2\mid M$, note that $\gcd\left(\abs{\Dd}, M\right)=1 $ implies $ d \equiv 1 \pmod{4}$.
Thus, 
\begin{align*}
&\lim_{x\rightarrow\infty}\# \left\{ \Q(\sqrt{d}) : \ d<0, \ \abs{\Dd}< x, \ \gcd\left( \abs{\Dd} , M\right)=1 \right\}\\
= & \lim_{x\rightarrow\infty}\# \left\{ d \textrm{ square-free}: \ d\equiv 1\pmod{4}, \ -x < d<0, \ \gcd\left( \abs{d} , M\right)=1 \right\}\\
\sim & \frac{1}{2}\frac{x}{\zeta(2)}\left(\frac{M}{ \prod_{q|M}(q +1)}\right).
\end{align*}
%&\frac{M x}{3\zeta(2) \prod_{q|M}(q +1)} + \frac{2M (x/4)}{3\zeta(2) \prod_{q|M}(q +1)} \sim \frac{M x}{2\zeta(2) \prod_{q|M}(q +1)}.
The reason for the different result in the Prachar--Humphries estimate (a factor of $\frac{1}{2}$ instead of $\frac{1}{3}$) is the following.
The integer $b=2M$ is a multiple of $4$, so there are again $ \prod_{q|M}(q-1)$ congruence classes $a\pmod b$ with $(a,M)=1$ and $a\equiv 1 \pmod{4}$.

Summing the Prachar--Humphries estimate over these classes, 
\begin{enumerate}
\item the term $\frac{1}{4M}$ in the equation \eqref{star} is replaced by $\frac{1}{2M}$, and 
\item the term $\frac{4}{3}$ in the line below disappears, since in the product $\prod_{q|M} \frac{q^2}{q^2-1}$, one of the primes $q=2$ by assumption.
\end{enumerate}
This accounts for a total scaling by a factor of $2\times\frac{3}{4}$.

The same argument also applies when $a \equiv 1\pmod{4}$ is replaced by $a \equiv 3 \pmod{4}$.
\end{proof}

\begin{remark}
\label{rem to be used in proof of thm 4.5}
\begin{enumerate}
\item \label{from cohen} It is well known that (see for example, \cite[Corollary~1.3]{CDO02}\footnote{This count is slightly different from the one in \cite{Bhargava_IMRN} where all fields are weighted by $1/\# \Aut$.})
\[
\lim_{x\rightarrow\infty}\# \left\{ \Q(\sqrt{d}) \textrm{ imaginary quadratic} : \ \abs{\Dd}< x \right\}\sim \frac{1}{2}\frac{x}{\zeta(2)}.
\]
In \emph{loc. cit.}, one finds an extra factor of $\frac{1}{2^{r_2}}$, but $r_2=0$ since $\Q$ has signature (1,0).
\item Proposition~\ref{lemma: count imaginary quadratic with disc coprime to pNE} says that for each prime $q$ with respect to which the coprimality condition is imposed on the discriminant, the proportion of imaginary quadratic fields reduces by a factor of $\frac{q}{q+1}$.
\end{enumerate} 
\end{remark}

We are now in a position to prove Theorem~\ref{thm: vary K hypothesis CR}.
\begin{proof}[Proof of Theorem~\ref{thm: vary K hypothesis CR}]
For \ref{hyp:CR} to be satisfied, we need
\begin{itemize}
\item $N^-_{\EC}$ is a product of an odd number of primes \emph{and}
\item none of the primes $q_i$ for $i\leq k$ divide $N^-_{\EC}$.
\end{itemize}
 Let $\mathcal{N}$ be the collection of all candidate subsets of the prime divisors of $N_\EC^-$.
In other words, let $\mathcal N$ be the collection of all subsets $\Pi^-\subseteq \{q_{k+1}, \dots, q_r\} $ for which $\# \Pi^-$ is odd.
Each such $\Pi^-$ determines a partition $\Pi=\Pi^-\sqcup\Pi^+$ of $\Pi_p(N_\EC)$ so that $p\in\Pi^+$.
Then
\[
Q^-(\textrm{\emph{choired, $p+$}}) = \bigsqcup_{\Pi\text{ so that }\Pi^-\in \mathcal{N}} Q^-(\Pi).
\]
Lemma~\ref{residueclasses} asserts that $Q^-(\Pi)$ corresponds to $\frac{1}{2^{r+1}}$ of the possible residue classes of discriminants coprime to $pN_\EC$.
Setting $M=pN_\EC$, Proposition~\ref{lemma: count imaginary quadratic with disc coprime to pNE} and Remark~\ref{rem to be used in proof of thm 4.5}\eqref{from cohen} tell us that the proportion of those imaginary quadratic fields with discriminant coprime to $pN_\EC$ among all quadratic imaginary fields is given by
\[
\mathfrak{d}:=\frac{pN_\EC}{(p+1)\prod_{q_i|N_\EC} (q_i + 1)},
\]
so that
\[
\delta_{\Pi}=\frac{1}{2^{r+1}}\mathfrak{d}=
\frac{pN_\EC}{2^{r+1}(p+1)\prod_{q_i|N_\EC} (q_i + 1)},
\]
Since $\# \mathcal{N}=2^{r-k-1}$ \cite[Exercise 1.1.13]{soberon2013}, 
\[
\delta\left(Q^-(\textrm{\emph{choired, $p+$}})\right)=\#\mathcal{N}\cdot \delta_{\Pi}.
\]
This proves our first assertion.

For the final assertion regarding the $\Lambda$-cotorsionness and triviality of the $\mu$-invariants, we need the two primes above $p$ to be \emph{totally ramified} in $\Q(\sqrt{d})_{\ac}/\Q(\sqrt{d})$.
A sufficient condition for this is the (additional) hypothesis that $p$ does not divide the class number of $\Q(\sqrt{d})$, which gives the factor of $1-c_p^*$ in the final assertion.
\end{proof}

\begin{remark}
The relationship between $c_p$ and $c^{*}_p$ is not immediate to the authors, e.g. should they be equal? 
We think it would be worthwhile to investigate this question in greater depth.
\end{remark}

\subsection{The ordinary case}
We now prove an analogue of Theorem~\ref{thm: vary K hypothesis CR} when $(\EC_{/\Q},p)$ is a fixed pair as before but $p$ is a prime of good \emph{ordinary} reduction of $\EC$.
We begin with the following definition:
\begin{defi}
Given a pair $(\EC_{/\Q},p)$ of an elliptic curve $\EC$ of square-free conductor $N_\EC$ and a prime $p$ of good ordinary reduction, define $Q^-(\textrm{choired},p\pm)$ -- or less precisely $Q^-(\textrm{choired})$ -- as the following set
\[
\left\{\Q(\sqrt{d}):d<0, \ \gcd\left(\abs{\Dd},pN_\EC\right)=1, \textrm{ \ref{hyp:CR} holds for } (\EC_{/\Q}, \Q(\sqrt{d}), p), \right\}.
\]
\end{defi}
Unlike in the supersingular case, here we count imaginary quadratic fields where $p$ is either inert or $p$ splits.
This is because, as pointed out in Section~\ref{introduce hypo CR}, Pollack--Weston have shown that for any triple $(\EC_{/\Q},\Q(\sqrt{d}),p)$ with $\Q(\sqrt{d})\in Q^-(\textrm{choired},p\pm)$, the associated Selmer group is $\Lambda$-cotorsion with $\mu(\EC/\Q(\sqrt{d})_{\ac})=0$.
In particular, there are no restrictions imposed on the splitting of $p$ in the imaginary quadratic field or on the class number of the said field.
We write
\begin{equation}
\label{ord as disjoint union}
Q^-(\textrm{choired},p\pm) = Q^-(\textrm{choired}, p+) \sqcup Q^-(\textrm{choired}, p-).
\end{equation}
Here, $p+$ (resp. $p-$) indicates quadratic fields in $Q^-(\textrm{choired},p\pm)$ with the additional property that $p$ splits (resp. remains inert), cf. Definition \ref{ss choired defi}.

\begin{theorem}
\label{thm: vary K hypothesis CR ordinary}
Fix a pair $(\EC_{/\Q},p)$ so that 
\begin{enumerate}[\textup{(}1\textup{)}]
%\item $p$ is a prime of good ordinary reduction of $\EC$.
\item $\EC_{/\Q}$ is an elliptic curve with square-free conductor $N_\EC = \prod_{i=1}^r q_i$, and
\item $p> 3$ is a prime at which $\EC$ has good ordinary reduction, $a_p\not\equiv 1\pmod{p}$, $\overline{\rho}_{\EC,p}$ is surjective, and $k<r$.
\end{enumerate}	
Then the proportion of imaginary quadratic fields with $\gcd\left(\abs{\Dd},pN_\EC\right) = 1$ and $\Sel_{p^\infty}(\EC/\Q(\sqrt{d})_{\ac})$ $\Lambda$-cotorsion with $\mu$-invariant equal to zero is at least
\[
\delta\left(Q^-(\textrm{choired},p\pm) \right) = \frac{pN_\EC}{2^{k+1} (p+1) \prod_{q_i|N_\EC} (q_i + 1)}.
\]
\end{theorem}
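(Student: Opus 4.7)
My plan is to follow the blueprint of Theorem~\ref{thm: vary K hypothesis CR}, using that the ordinary form of Pollack--Weston's theorem is strictly weaker in its hypotheses than the supersingular form: it places no constraint on the splitting of $p$ in $\Q(\sqrt d)$ and no condition on the class number. Consequently the two correction factors (the $\tfrac 12$ for splitting and the $(1-c_p^*)$ for the class number) that appeared in the supersingular count should both disappear, and the lower bound should essentially double.

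First I would apply the disjoint decomposition~\eqref{ord as disjoint union} and treat the $p+$ and $p-$ pieces separately. The $p+$ piece is already handled by the first assertion of Theorem~\ref{thm: vary K hypothesis CR}, which yields $\delta(Q^-(\text{choired},p+))=\frac{pN_\EC}{2^{k+2}(p+1)\prod_{q_i|N_\EC}(q_i+1)}$. For the $p-$ piece I would repeat the proof of Theorem~\ref{thm: vary K hypothesis CR} with one change: the admissible partitions $\Pi=\Pi^-\sqcup\Pi^+$ of $\Pi_p(N_\EC)$ now have $p\in\Pi^-$ (forcing $p$ to be inert) rather than $p\in\Pi^+$. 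The constraint coming from \ref{hyp:CR} is otherwise unchanged -- namely $\Pi^-\cap\{q_1,\ldots,q_r\}$ must be an odd-sized subset of $\{q_{k+1},\ldots,q_r\}$ -- so the number of admissible partitions is still $\#\mathcal N = 2^{r-k-1}$.

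The key observation is that Lemma~\ref{residueclasses} and Proposition~\ref{lemma: count imaginary quadratic with disc coprime to pNE} go through verbatim for this new family of partitions: the construction of $\mathfrak r_\Pi$ treats $p$ symmetrically with the $q_i$, so each admissible partition contributes density $\tfrac{1}{2^{r+1}}\mathfrak d$, with $\mathfrak d=\tfrac{pN_\EC}{(p+1)\prod_{q_i|N_\EC}(q_i+1)}$. Summing over $\mathcal N$ gives $\delta(Q^-(\text{choired},p-))=\tfrac{1}{2^{k+2}}\mathfrak d$, matching the $p+$ contribution. Adding the two yields $\delta(Q^-(\text{choired},p\pm))=\tfrac{1}{2^{k+1}}\mathfrak d$, which is the claim. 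The Iwasawa-theoretic conclusion then follows by applying \cite[Theorem~1.3]{PW11} to each triple $(\EC_{/\Q},\Q(\sqrt d),p)$ in $Q^-(\text{choired},p\pm)$, with no auxiliary ramification or class number input required.

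I do not anticipate a substantive obstacle: the combinatorial set-up is entirely parallel to the supersingular case. The only item I would pause on is verifying that the Kronecker-symbol description driving Lemma~\ref{residueclasses} really is symmetric between $p$ and the $q_i$; this holds because $p>3$ is odd and coprime to $N_\EC$, so $p$ plays exactly the role of any other $q_i$ in the factorisation $\prod_{q\in\Pi_p(N_\EC)}(\Z/q\Z)^*\cong(\Z/pN_\EC\Z)^*$ used in the proof of that lemma.
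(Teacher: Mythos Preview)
Your proposal is correct and follows essentially the same approach as the paper: both split $Q^-(\textrm{choired},p\pm)$ via the decomposition~\eqref{ord as disjoint union}, observe that the counting argument of Theorem~\ref{thm: vary K hypothesis CR} treats $p$ symmetrically with the $q_i$, and conclude that the $p+$ and $p-$ pieces each contribute $\tfrac{1}{2^{k+2}}\mathfrak d$. One minor caveat: when you invoke ``the first assertion of Theorem~\ref{thm: vary K hypothesis CR}'' for the $p+$ piece, that theorem is stated under a supersingularity hypothesis, so strictly speaking you are reusing its \emph{proof} (which is purely combinatorial and independent of the reduction type) rather than the theorem itself---the paper handles this by rerunning the argument rather than citing the earlier result.
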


\begin{proof}
For \ref{hyp:CR} to be satisfied, we need
\begin{itemize}
\item $N^-_{\EC}$ is a product of an odd number of primes \emph{and}
\item none of the primes $q_i$ for $i\leq k$ divide $N^-_{\EC}$.
\item $a_p\not\equiv \pm{1}\mod p$.
\end{itemize}
However, the third condition involving the Fourier coefficients does not depend on the imaginary quadratic field $\Q(\sqrt{d})$.
In other words, this condition does not alter the count of $Q^-(\textrm{choired}, p\pm)$.
Hence, for each subset the calculations go through verbatim from the previous section.

Let $\mathcal N$ be the collection of all $\Pi^-\subseteq \{q_{k+1}, \dots, q_r\} $ for which $\# \Pi^-$ is odd.
Analogously to the supersingular case, to each such $\Pi^-$ we associate a partition of $\Pi $ of $\Pi_p(N_{\EC})= \{p, q_1, \ldots, q_r\}$ by
\[
\Pi = \begin{cases}
\Pi^- \sqcup \Pi^+,&\textrm{letting $p\in\Pi^+$} \textrm{ if } p \textrm{ splits in }\Q(\sqrt{d})\\
\left(\Pi^- \sqcup \{p\}\right) \sqcup \Pi^+, &\textrm{letting $p\not\in\Pi^+$}\textrm{ if } p \textrm{ is inert in }\Q(\sqrt{d}).\\
\end{cases}
\]
%We can rewrite \eqref{ord as disjoint union} as follows
%\begin{align*}
%Q^-(\textrm{choired}) &= Q^-(\textrm{choired, $p+$}) \sqcup Q^-(\textrm{choired, $p-$})\\
%&= \bigsqcup_{\Pi^-\in \mathcal{N}} Q_{s}^-(\Pi^-) \sqcup \bigsqcup_{\Pi^-\in \mathcal{N}} Q_{i}^-(\Pi^- \cup \{p\}).
%\end{align*}
The proof then proceeds analogously to the proof of Theorem~\ref{thm: vary K hypothesis CR}, noting that adding $p$ to the collection of prescribed inert primes doesn't change the argument.
Thus,
\[
\delta\left(Q^-(\textrm{choired, $p+$}) \right) = \delta\left(Q^-(\textrm{choired, $p-$}) \right) = \frac{pN_\EC}{2^{k+2} (p+1) \prod_{q_i|N_\EC} (q_i + 1)}.
\]
This completes the proof of the theorem.
\end{proof}

\bibliographystyle{amsalpha}
\bibliography{references}
\end{document}